\newdimen\plusheight
\def\+{\;\lower\plusheight\hbox{$+$}\;}
\newdimen\minusheight
\def\-{\;\lower\minusheight\hbox{$-$}\;}
\newdimen\cdotsheight
\def\cds{\lower\cdotsheight\hbox{$\cdots$}}
\newtheorem{thm}{Theorem}[section]
\newtheorem{lem}[thm]{Lemma}
\newtheorem{cor}[thm]{Corollary}
\newtheorem{rem}[thm]{Remark}
\begin{document}
\title[Binomial Symbols and Prime Moduli] {Binomial Symbols and Prime Moduli}
\author{Manjil P.~Saikia}
\address{Department of Mathematical Sciences, Tezpur University, Napaam, Sonitpur, Pin-784028, India}
\email{manjil.saikia@gmail.com, manjil\_msi09@agnee.tezu.ernet.in}
\author{Jure Vogrinc}
\address{Faculty of Mathematics and Physics, University of Ljubljana, Jadranska ul.~19, 1000, Ljubljana, Slovenia}
\email{jure.vogrinc@gmail.com}


\vspace*{0.5in}
\begin{center}
{\bf Binomial Symbols and Prime Moduli}\\[5mm]
{\footnotesize  MANJIL P.~SAIKIA\footnote{Corresponding Author: manjil.saikia@gmail.com} and JURE VOGRINC}\\[3mm]
\end{center}

\vskip 5mm \noindent{\footnotesize{\bf Abstract.} We try to improve a problem asked in an Indian Math Olympiad. We give a brief overview of the work done in \cite{mps} and \cite{mps2} where the authors have found a periodic sequence and the length of its period, all inspired from an Olympiad problem. The main goal of the paper is to improve the main result in \cite{mps}.}

\vskip 3mm

\noindent{\footnotesize Key Words: Prime Moduli, Binomial Co-efficients, Periodic sequences, Primality Testing.}

\vskip 3mm

\noindent{\footnotesize 2000 Mathematical Reviews Classification
Numbers: 11A07, 11A41, 11A51, 11B50.}

\section{{Motivation}}

The motivation behind this work was an Indian Olympiad problem mentioned in \cite{mps}. Saikia and Vogrinc in \cite{mps} have proved the following result.

\begin{thm}
A natural number $p>1$ is a prime if and only if $\binom{n}{p}-\lfloor\frac{n}{p}\rfloor$ is divisible by $p$ for every non-negative $n$, where $\binom{n}{p}$ is the number of different ways in which we can choose $p$ out of $n$ elements and $\lfloor x \rfloor$ is the greatest integer not exceeding the real number $x$.
\end{thm}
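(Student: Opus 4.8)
The plan is to analyze the integer sequence $f(n):=\binom{n}{p}-\lfloor n/p\rfloor$ through a single step-$p$ recursion. Combining the Vandermonde convolution $\binom{n+p}{p}=\sum_{j=0}^{p}\binom{p}{j}\binom{n}{p-j}$ with the trivial identity $\lfloor(n+p)/p\rfloor=\lfloor n/p\rfloor+1$, I get the exact relation
\begin{equation}\label{eq:fprec}
f(n+p)-f(n)=\sum_{j=1}^{p-1}\binom{p}{j}\binom{n}{p-j}\qquad(n\ge 0),
\end{equation}
because the $j=0$ and $j=p$ terms of the Vandermonde sum contribute $\binom{n}{p}$ and $\binom{n}{0}=1$, and this last $1$ cancels the increment of the floor function. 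I also record the initial values $f(0)=f(1)=\cdots=f(p-1)=0$, valid since $\binom{n}{p}=0=\lfloor n/p\rfloor$ for $0\le n<p$.

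For the forward implication, suppose $p$ is prime. Then $p\mid\binom{p}{j}$ for all $1\le j\le p-1$, so the right-hand side of \eqref{eq:fprec} is divisible by $p$ and therefore $f(n+p)\equiv f(n)\pmod p$ for every $n\ge 0$. Since $f$ vanishes on $\{0,1,\dots,p-1\}$, an induction on the residue of $n$ modulo $p$ gives $p\mid f(n)$ for all $n\ge 0$.

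For the converse I will prove the contrapositive. Assume $p>1$ is composite and let $q$ be its least prime factor, so $2\le q<p$. The key number-theoretic input is that $p\mid\binom{p}{m}$ for $1\le m\le q-1$ while $p\nmid\binom{p}{q}$: for $m\le q-1$ the number $m!$ is coprime to every prime divisor $r$ of $p$, and the numerator $p(p-1)\cdots(p-m+1)$ has exactly the same $r$-adic valuation as $p$ (each term $p-i$ with $1\le i\le m-1$ is prime to $r$, since $r\mid p$ and $0<i<m\le q-1<r$ force $r\nmid p-i$), so $p\mid\binom{p}{m}$; whereas the extra factor $q$ contributed by $q!$ lowers the $q$-adic valuation of $\binom{p}{q}$ strictly below that of $p$. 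Now evaluate \eqref{eq:fprec} at $n=q$: since $\binom{q}{p-j}=0$ unless $j\ge p-q$, the substitution $m=p-j$ rewrites the sum as $\sum_{m=1}^{q}\binom{p}{m}\binom{q}{m}$, and modulo $p$ every term with $m<q$ dies, leaving $\binom{p}{q}\binom{q}{q}=\binom{p}{q}\not\equiv 0\pmod p$. As $q<p$ we have $f(q)=0$, so \eqref{eq:fprec} yields $f(p+q)\not\equiv 0\pmod p$, and $n=p+q$ is the desired non-negative witness.

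The Vandermonde bookkeeping and the induction in the prime case are routine. I expect the main obstacle to be the converse direction — specifically, pinning down an exponent $m$ with $p\nmid\binom{p}{m}$ for composite $p$ (the least prime factor works, by the valuation argument above) and then carefully tracking which Vandermonde terms survive after specializing $n=q$, where the inequality $q<p$ is exactly what makes the base value $f(q)$ vanish and lets the computation conclude.
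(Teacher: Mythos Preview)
Your proof is correct, and both you and the paper land on the same counterexample $n=p+q$ for the composite case, but the routes differ. The paper treats the two directions with separate direct computations: for $p$ prime it writes $n=ap+b$ and analyzes the product $\binom{ap+b}{p}$ via a reduced residue system argument (in effect, a bare-hands Lucas theorem) to get $\binom{n}{p}\equiv a\pmod p$; for $p$ composite it picks a prime power $q^x\| p$, rewrites $\binom{p+q}{p}=\binom{p+q}{q}$ as an explicit product, and reduces modulo $q^x$ to see the discrepancy. Your argument is instead organized around a single device, the step-$p$ Vandermonde recursion $f(n+p)-f(n)=\sum_{j=1}^{p-1}\binom{p}{j}\binom{n}{p-j}$: primality of $p$ is precisely the condition that all interior coefficients $\binom{p}{j}$ vanish modulo $p$, so the recursion propagates the initial zeros when $p$ is prime and fails to do so at $n=q$ when $p$ is composite, via your valuation analysis showing $p\mid\binom{p}{m}$ for $m<q$ but $p\nmid\binom{p}{q}$. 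Your approach is more unified and makes the role of the least prime factor $q$ structurally transparent; the paper's approach avoids invoking Vandermonde and is a bit more self-contained at the level of elementary manipulations.
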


We give three different proofs of the above result in \cite{mps}. For the sake of completeness we give below a proof.

\begin{proof}
First assume that $p$ is prime. Now we consider $n$ as $n=ap+b$ where $a$ is a non-negative integer and $b$ an integer $0\leq b<p$. Obviously,

$$\lfloor \frac{n}{p}\rfloor=\lfloor \frac{ap+b}{p}\rfloor\equiv a~(mod~p).$$

Now let us calculate $\binom{n}{p}~(mod~p)$.

$$\binom{n}{p}=\binom{ap+b}{p}$$
$$=\frac{(ap+b)\cdot(ap+b-1)\cdots(ap+1)\cdot ap\cdot(ap-1)\cdots(ap+b-p+1)}{p\cdot(p-1)\cdots 2\cdot1}$$
$$=\frac{a\cdot(ap+b)\cdot(ap+b-1)\cdots(ap+1)\cdot(ap-1)\cdots(ap+b-p+1)}{(p-1)\cdot(p-2)\cdots 2\cdot 1}$$

We denote this number by $X$.

We have $X\equiv c~(mod~p)$ for some $0\leq c<p$. Consequently taking modulo $p$, we have
 $$c(p-1)!=X(p-1)!=a(ap+b)\cdots(ap+1)(ap-1)\cdots(ap+b-p+1)$$

All the numbers $ap+b,\dots ,ap+b+1-p$ (other than $ap$) are relatively prime to $p$ and obviously none differ more than $p$ so they make a reduced residue system modulo $p$, meaning we have mod $p$,
$$(p-1)!=(ap+b)\cdot(ap+b-1)\dots(ap+1)\cdot(ap-1)\cdot(ap+b-p+1)$$
both sides of the equation being relatively prime to $p$ so we can deduce $X\equiv c \equiv a~(mod~p)$. And finally  $\binom{n}{p}\equiv X\equiv a\equiv \lfloor \frac{n}{p}\rfloor~(mod~p)$.

To complete the other part of the theorem we must construct a counterexample for every composite number $p$. If $p$ is composite we can consider it as $q^x\cdot k$ where $q$ is some prime factor of $p$, $x$ its exponent and $k$ the part of $p$ that is relatively prime to $q$ ($x$ and $k$ cannot be simultaniously $1$ or p is prime). We can obtain a counterexample by taking $n=p+q=q^xk+q$ will make a counter example. We have: $$\binom{p+q}{p}=\binom{p+q}{q}=\frac{(q^xk+q)(q^xk+q-1)\dots (q^xk+1)}{q!}$$
Which after simplifying the fraction equals: $(q^{x-1}k+1)\frac{(q^xk+q-1)\dots (q^xk+1)}{(q-1)!}$. Similary as above we have $(q^xk+q-1)\dots (q^xk+1)=(q-1)!\neq 0$ modulo $q^x$ therefore,
$$\frac{(q^xk+q-1)\dots (q^xk+1)}{(q-1)!}\equiv 1~(mod~q^x)$$ and
 $$\binom{p+q}{p}\equiv q^{x-1}k+1~(mod~q^x).$$

On the other hand obviously,
$$\lfloor\frac{q^xk+q}{q^xk}\rfloor\equiv 0~(mod~q^x).$$

And since $q^{x-1}k+1$ can never be equal to $0$ modulo $q^x$ we see that $$\binom{p+q}{p}\neq \lfloor\frac{p+q}{p}\rfloor~(mod~q^x)$$
consequently also incongruent modulo $p=q^xk$.

\end{proof}

\begin{rem}
Here we would like to comment that by taking $q$ as the minimal prime factor of $p$ and using the same method as above we can simplify the proof even more. We can than compare $\lfloor\frac{p+q}{p}\rfloor$ and $\binom{p+q}{p}$ directly modulo $p=q^xk$ and not $q^x$.
\end{rem}

\begin{rem} Instead of looking modulo $p$, we can look at higher powers of $p$, or we can look at the $n$-th Fibbonacci prime and so on. However, initial investigations by the authors suggest that finding a congruence relation in those cases becomes more difficult.
\end{rem}

This theorem is the motivation behind the following two theorems.

\begin{thm}
The sequence $a_n=\binom{m}{x}~(mod~m)$ is periodic, where $x,m \in \mathbb{N}$.
\end{thm}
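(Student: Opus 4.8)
The statement concerns the sequence indexed by $n$, with $x$ and $m$ held fixed; that is, I read it as the assertion that $a_n=\binom{n}{x}\pmod m$ satisfies $a_{n+T}=a_n$ for all $n\ge 0$ and some period $T\ge 1$. The plan is to realise this sequence as one coordinate of the orbit of a point under an invertible transformation of a finite set, so that periodicity follows from nothing more than the pigeonhole principle once invertibility is in hand.

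The first step is to assemble the relevant binomial coefficients into a single vector. Set $v_n=\bigl(\binom{n}{0},\binom{n}{1},\dots,\binom{n}{x}\bigr)^{T}$, using the convention $\binom{n}{-1}=0$. Pascal's rule $\binom{n+1}{k}=\binom{n}{k}+\binom{n}{k-1}$ is precisely the statement that $v_{n+1}=Mv_n$, where $M$ is the $(x+1)\times(x+1)$ matrix with $1$'s on the main diagonal and on the first subdiagonal and $0$'s elsewhere. Now reduce everything modulo $m$. The second step is the observation that $M$ is lower triangular with all diagonal entries equal to $1$, so $\det M=1$ is a unit of $\mathbb{Z}/m\mathbb{Z}$; hence $M$ is invertible over $\mathbb{Z}/m\mathbb{Z}$, and $v\mapsto Mv$ is a bijection of the finite set $\bigl(\mathbb{Z}/m\mathbb{Z}\bigr)^{x+1}$.

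The third step is the finiteness argument. Since the state space has only $m^{x+1}$ elements, two of $v_0,v_1,\dots,v_{m^{x+1}}$ are congruent modulo $m$, say $v_i\equiv v_j$ with $0\le i<j$. Applying $M^{-1}$ to this congruence $i$ times gives $v_0\equiv v_{j-i}$, and then applying $M^n$ gives $v_n\equiv v_{n+T}$ for every $n\ge 0$, where $T=j-i$. Comparing last coordinates yields $\binom{n+T}{x}\equiv\binom{n}{x}\pmod m$, which is exactly the claim. (If the intended reading were instead $\binom{n}{m}\bmod m$, the same argument applies verbatim, now with a vector of length $m+1$.)

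I do not expect periodicity itself to present any real obstacle: once the Pascal recurrence places the sequence inside a finite invertible dynamical system, periodicity is forced. The genuinely substantive point is the value of the minimal period $T$ as a function of $x$ and $m$. A natural route there is the Chinese Remainder Theorem to reduce to prime-power moduli $p^e$, together with Kummer's theorem for $v_p\binom{n}{x}$ and the prime-power Lucas-type congruences of Davis--Webb and Granville for the residue modulo $p^e$; these should exhibit the period as an $\operatorname{lcm}$ of quantities of the shape $p^{\,e+\lceil\log_p x\rceil}$. That sharper computation is the concern of the next result.
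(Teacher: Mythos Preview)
Your proof is correct, and it takes a genuinely different route from the paper's. The paper argues by induction on $x$: the case $x=1$ is immediate, and for the inductive step it uses the hockey-stick identity $\binom{n}{x+1}=\sum_{i=1}^{n-1}\binom{i}{x}$. If $k$ is a period for $\binom{\cdot}{x}\bmod m$ and $c\equiv\sum_{i=1}^{k}\binom{i}{x}\pmod m$, then any block of $mk$ consecutive terms sums to $mc\equiv 0$, whence $mk$ is a period for $\binom{\cdot}{x+1}$. Your argument instead packages $\binom{n}{0},\dots,\binom{n}{x}$ into a vector, recognises Pascal's rule as an invertible linear map over $\mathbb{Z}/m\mathbb{Z}$, and invokes pigeonhole on the finite state space.

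Each approach has its own pay-off. The inductive argument is entirely elementary and yields the bound that the period for $x$ divides $m^{x}$; it also dovetails naturally with the summation techniques used later in the paper. Your dynamical-systems framing is more conceptual, generalises immediately to any sequence satisfying a linear recurrence with unit leading coefficient, and makes the crucial point---that periodicity is \emph{pure} rather than merely eventual---transparent via the invertibility of $M$. Your closing remarks about the minimal period correctly anticipate the next theorem, though the paper's derivation of $l(m)$ proceeds by a direct $p$-adic valuation count rather than through Lucas-type congruences.
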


The proof based on mathematical induction can be found in \cite{nehu} and \cite{mps2}. We present below a slightly modified account.

\begin{proof}
If $x=1$ the sequence is obviously periodic for any modulo $m$.

Now we assume that the sequence is periodic for a fixed $x$ and arbitrary $m$. We note that

$$\binom{n}{x+1}=\sum^{n-1}_{i=1}\binom{i}{x}.$$

Let $k$ be the length of a period of sequence $a_n=\binom{n}{x}~(mod~m)$, meaning $\binom{n+k}{x}\equiv \binom{n}{x}~(mod~m)$.

Therefore $\sum^k_{i=1} \binom{i}{x}\equiv c~(mod~m)$ for some $c$ and consequently $\sum^{n+mk}_{i=n+1} \binom{i}{x}=mc=0~(mod~m)$ for every integer $n$. All that is now required is another calclulation (the second equality from the right is modulo $m$):
$$\binom{n+mk}{x+1}=\sum^{n+mk-1}_{i=1}\binom{i}{x}=\sum^{n-1}_{i=1}\binom{i}{x}+\sum^{n+mk-1}_{i=n}\binom{i}{x}=\sum^{n-1}_{i=1}\binom{i}{x}=\binom{n}{x+1}$$
This now shows that sequence $b_n=\binom{n}{x+1}~(mod~m)$ is also periodic for every modulo $m$ which completes the induction and yields the desired result.
\end{proof}

\begin{rem}
Because of the upper result we know that a sequence $a_n=\binom{n}{m}~(mod~m)$ is also periodical. And a sequence $c_n=\lfloor\frac{n}{m}\rfloor~(mod~m)$ is obviously periodical. This combined with the above yields that for a composite modulo $m$ there exist infinitely many natural numbers $n$ such that $a_n\neq c_n~(mod~m)$.
\end{rem}

The above theorem states that for every $m$ the sequence $a_n=\binom{n}{m}~(mod~m)$ is periodic. The next most natural question to ask is, what is the minimal length of the period? Which gives us,
\begin{thm}
For a natural number $m=\prod^k_{i=1}p_i^{b_i}$, the sequence $a_n=\binom{n}{m}~(mod~m)$ has a period of minimal length,

$$l(m)=\prod^k_{i=1}p_i^{\lfloor \log_{p_i}m \rfloor+b_i}$$
\end{thm}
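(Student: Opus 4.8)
The plan is to reduce to prime-power moduli via the Chinese Remainder Theorem, to recast ``$T$ is a period'' as a divisibility condition on the binomial coefficients $\binom{T}{j}$, and then to settle that condition using Kummer's theorem. Since two integers agree modulo $m=\prod_{i=1}^{k}p_i^{b_i}$ exactly when they agree modulo every $p_i^{b_i}$, an integer $T$ is a period of $a_n=\binom{n}{m}\pmod{m}$ if and only if it is a period of $a_n\pmod{p_i^{b_i}}$ for each $i$; hence the minimal period modulo $m$ is the least common multiple of the minimal periods modulo the $p_i^{b_i}$, and since (as will be shown) these are powers of the distinct primes $p_i$, that lcm is simply their product. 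It therefore suffices to prove: for $p=p_i$ and $b=b_i$, writing $s:=\lfloor\log_p m\rfloor+1$ for the number of base-$p$ digits of $m$ (so that $p^{s-1}\le m<p^s$), the minimal period of $\binom{n}{m}\pmod{p^b}$ equals $p^{\,s+b-1}$. Substituting back, with $s_i:=\lfloor\log_{p_i}m\rfloor+1$, gives $l(m)=\prod_i p_i^{(s_i-1)+b_i}=\prod_i p_i^{\lfloor\log_{p_i}m\rfloor+b_i}$, as claimed.

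The crucial step is a clean description of all periods of $\binom{n}{m}\pmod{p^b}$. The Vandermonde identity gives
$$\binom{n+T}{m}-\binom{n}{m}=\sum_{j=1}^{m}\binom{T}{j}\binom{n}{m-j}=\sum_{i=0}^{m-1}\binom{T}{m-i}\binom{n}{i},$$
and a $\mathbb Z$-linear combination $\sum_{i=0}^{d}c_i\binom{n}{i}$ is divisible by an integer $N$ for every $n\ge 0$ if and only if $N\mid c_i$ for all $i$ --- one sees this by evaluating at $n=0,1,2,\dots$ and taking successive forward differences, using $\Delta\binom{n}{i}=\binom{n}{i-1}$ to strip off one coefficient at a time. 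Applying this with $N=p^b$ yields the characterization: $T$ is a period of $\binom{n}{m}\pmod{p^b}$ if and only if $p^b\mid\binom{T}{j}$ for every $1\le j\le m$.

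It remains to show that the set $\{\,T\ge1:\ v_p\binom{T}{j}\ge b\text{ for all }1\le j\le m\,\}$ is exactly the set of multiples of $p^{\,s+b-1}$, where $v_p$ denotes the $p$-adic valuation; by Kummer's theorem $v_p\binom{T}{j}$ equals the number of carries produced when $j$ and $T-j$ are added in base $p$. For the inclusion $\supseteq$: write $T=p^{\,s+b-1}u$ with $u\ge1$, and for $1\le j\le m<p^s$ use the decomposition $T-j=p^{s}(p^{b-1}u-1)+(p^s-j)$, so that the base-$p$ digits of $T-j$ are the $s$ digits of $p^s-j$, then $b-1$ digits all equal to $p-1$ (because $p^{b-1}u-1\equiv-1\pmod{p^{b-1}}$), then the digits of $u-1$; adding $j$ forces a carry out of position $s-1$ (since $j+(p^s-j)=p^s$) and then $b-1$ further carries through the block of $(p-1)$'s, giving at least $b$ carries in all, so $p^b\mid\binom{T}{j}$. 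For the inclusion $\subseteq$, suppose $T\ge1$ is a period; then $T>m$ (otherwise the test value $j=T$ yields $\binom{T}{T}=1$, not divisible by $p^b$). If $v_p(T)<s-1$, then the digit of $T$ in position $e:=v_p(T)$ is nonzero, $1\le p^{e}\le p^{s-2}<p^{s-1}\le m$, and the addition $p^{e}+(T-p^{e})$ has no carries, so $p\nmid\binom{T}{p^{e}}$, a contradiction; hence $v_p(T)=e\ge s-1$. Writing $T=p^eW$ with $p\nmid W$, one checks that $p^{s-1}+(T-p^{s-1})=p^{s-1}+p^{s-1}(p^{e-s+1}W-1)$ produces exactly $e-s+1$ carries, so $p^b\mid\binom{T}{p^{s-1}}$ forces $e-s+1\ge b$, i.e.\ $p^{\,s+b-1}\mid T$. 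This settles the prime-power case, and the first paragraph assembles the formula for $l(m)$.

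Conceptually all the content lies in the reformulation of the second paragraph; after that, the main obstacle is the base-$p$ bookkeeping of the third paragraph --- faithfully tracking the digits of $T-j$ and the induced carry chain in each of the two inclusions --- where the one genuinely useful observation is that the single binomial coefficient $\binom{T}{p^{s-1}}$ already obstructs every period shorter than $p^{\,s+b-1}$.
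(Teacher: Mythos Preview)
Your argument is correct and takes a genuinely different route from the paper's. The paper works directly modulo $m$: for the lower bound it locates the next block of $m$ consecutive zeroes in the sequence and, by tracking the $p$-adic valuation of the numerator and denominator of a well-chosen $\binom{x-1}{m}$, forces $p^{\lfloor\log_p m\rfloor+b}\mid n$ for each prime power $p^b\|m$; for the upper bound it shows $\binom{n+l(m)}{m}\equiv\binom{n}{m}\pmod m$ by peeling the correct power of each $p_i$ off every factor $n-i$ and observing that $l(m)$ is a multiple of $m$ times that power. You instead reduce to prime-power moduli by the Chinese Remainder Theorem, convert ``$T$ is a period'' into the purely arithmetic condition $p^b\mid\binom{T}{j}$ for all $1\le j\le m$ via Vandermonde and the integer-basis property of the $\binom{n}{i}$, and then settle that condition with Kummer's carry-counting, the key observation being that $\binom{T}{p^{s-1}}$ alone already rules out any $T$ with $v_p(T)<s+b-1$. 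Your approach is more structural---it yields a clean description of \emph{all} periods, not just the minimal one, and would adapt immediately to computing the period of $\binom{n}{m}$ modulo an arbitrary prime power unrelated to $m$---at the cost of invoking Kummer's theorem; the paper's approach is entirely elementary and self-contained but more ad hoc, and its lower-bound step (pinning down the number $y$ and showing $y=n$) is noticeably more delicate than your single evaluation at $j=p^{s-1}$.
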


The proof given below is the one given by the authors in \cite{mps2}.

\begin{proof}

A sequence $a_n=\binom{n}{m}~(mod~m)$ where $m=\prod^k_{i=1}p_i^{b_i}$ starts with $m$ zeroes (we start with $a_0$). Now let us see when is the next time we have $m$ consecutive zeroes in the sequence $a_n$. We assume this happenes at some natural number $n$, that is
$$\binom{n}{m}\equiv \binom{n+1}{m}\equiv \dots\equiv \binom{n+m-1}{m}\equiv 0~(mod~m).$$

Let $p$ be a prime dividing $m$ and $b$ be it's exponent in the prime factorisation of $m$. We have $\binom{n+i}{m}\equiv 0~(mod~p^b)$ for $0\leq i<m$.

Obviously the exponent of $p$ in prime factorisation of $m!$ is $$\vartheta_p(m)=\sum^{\infty}_{i=1}\lfloor\frac{m}{p^i}\rfloor=\sum^{k}_{i=1}\lfloor\frac{m}{p^i}\rfloor,$$ where $k$ is the last summand different to zero and $k=\lfloor\log_p m\rfloor$.

Among numbers $n+1,n+2\dots,n+m$ there exist one that is divisible by $p^k$ (there are $m$ consecutive numbers and $m\geq p^k$). We denote this number by $x$. We have,
$$\binom{x-1}{m}=\frac{(x-1)(x-2)\dots(x-m)}{m!}.$$

Since we have $-(x-i)\equiv i~(mod~p^j)$ for all $1\leq i<m$ and $1\leq j\leq k$, so there are same number of numbers divisible by $p^j$ in $(x-1)(x-2)\dots(x-m)$ as in $m!$ for $1\leq j\leq k$.

On the other hand we have $\binom{x-1}{m}\equiv 0~(mod~p^b)$ (since $x-1$ is one of the numbers $n,n+1\dots n+m-1$). Of $m$ consecutive integers obviously only one can be divisible by $p^j$ if $j<k$. Therefore if we want the numerator of $\binom{x-1}{m}$ to have exponent of $p$ for $b$ larger than the denominator (that is in order to have $\binom{x-1}{m}\equiv 0~(mod~p^b)$) we need one of the numbers of the nominator to be divisible by $p^{\lfloor\log_p m\rfloor+a}$. Denote this number by $y$.

We assume $y\neq n$. Than either $y+m$ (if $y<n$) or $y-1$ (if $y>n$) are in the set $n,n+1\dots n+m-1$. This means that $$\binom{y+m}{m}\equiv 0~(mod~p^b).$$
(The other case is very similar and uses the same argument.)

However that is imposible since $y\equiv 0~(mod~p^{k+1})$ meaning the exponent of $p$ in prime factorisation of $(y+m)(y+m-1)\dots(y+1)$ is the same as in prime factorisation of $m!$ or in other words that $\binom{y+m}{m}$ is relatively prime to $p$. We reached a contradiction which means $y=n$.

The same argument will work for any arbitrary prime number dividing $m$. That means for every prime number $p$ dividing $m$ (infact $p^b|m$) we need $n$ to be divisible by $p^{\lfloor\log_p m\rfloor+b}$, therefore the length of the period of the sequence, $a_n$ must be a multiple of the number $\prod^k_{i=1}p_i^{\lfloor \log_{p_i}m \rfloor +b_i}$.

All that remains is to show that this infact is the lenght of the period. We need to prove that for every natural number $n$ we have $$\binom{n}{m}\equiv\binom{n+l(m)}{m}~(mod~m),$$ where $$l(m)=\prod^k_{i=1}p_i^{\lfloor \log_{p_i}m \rfloor+b_i}.$$

Because of some basic properties of congruences ($a\equiv b~(mod~m)$ equivalent to $ax\equiv bx~(mod~m)$ if $\gcd(m,x)=1$), it is enough to show that, $$\frac{\prod^{m-1}_{i=0}(n-i)}{\prod^k_{i=1}p_i^{\vartheta_{p_i}(m)}}\equiv \frac{\prod^{m-1}_{i=0}(n+l(m)-i)}{\prod^k_{i=1}p_i^{\vartheta_{p_i}(m)}}~(mod~m).$$

Among the numbers $n,n-1\dots n-m+1$ there are atleast $\lfloor\frac{n}{p^l}\rfloor$ that are diviiable by $p^l$ for every positive integer $l$ and any prime divisor $p$ of $m$.

This is because $$\prod^{m-1}_{i=0}(n-i)=\frac{n!}{(n-m)!}$$ and $$\vartheta_p(a+b)\geq \vartheta_p (a)+\vartheta_p (b).$$

The fraction $\frac{\prod^{m-1}_{i=0}(n-i)}{\prod^k_{i=1}p_i^{\vartheta_{p_i}(m)}}$ can therefore be simplified in such a way that no number of the product $\prod^{m-1}_{i=0}(n-i)$ is divided by $p$ on exponent greater than $\lfloor\log_p m\rfloor$.

In other words the fraction $\frac{\prod^{m-1}_{i=0}(n-i)}{\prod^k_{i=1}p_i^{\vartheta_{p_i}(m)}}$ can be simplified as $\prod^{m-1}_{i=0}\frac{n-i}{\prod^k_{j=1}p_j^{c_j}}$ where for each $j,i$ we have $n-i$ divisable by $p_j^{c_j}$ and $c_j\leq \lfloor\log_{p_j} m\rfloor$ (each factor is an integer).

But then since for every $j$ we have $\prod^k_{j=1}p_j^{c_j}$ divides $\prod^k_{i=1}p_i^{\lfloor\log_{p_i} m\rfloor}$ and since $m\cdot \prod^k_{i=1}p_i^{\lfloor\log_{p_i} m\rfloor}=l(m)$ we have for every $i$ mod $m$,
$$\frac{n+l(m)-i}{\prod^k_{j=1}p_j^{c_j}}=\frac{n-i}{\prod^k_{j=1}p_j^{c_j}}+\frac{l(m)}{\prod^k_{j=1}p_j^{c_j}}=\frac{n-i}{\prod^k_{j=1}p_j^{c_j}}+t\cdot m=\frac{n-i}{\prod^k_{j=1}p_j^{c_j}}$$

This completes the result and hence the length of the minimal period of the sequence, $a_n$ is
$$l(m)=\prod^k_{i=1}p_i^{\lfloor \log_{p_i}m \rfloor+b_i}.$$

\end{proof}

\begin{rem}
If we define $\binom{n}{m}$ also for negative integers $n$, as $\binom{n}{m}=\frac{\prod^{m-1}_{i=0}(n-i)}{m!}$ we can adopt the minimal period length formula for all integers (we can prove in exactly the same way that $\binom{n+l(m)}{m}=\binom{n}{m}$ for every integer $n$).
\end{rem}

The above theorem gives us very easily the following two corollaries:

\begin{cor}
For every positive integer $m=\prod^k_{i=1}p_i^{b_i}$ we have $m^2|l(m)$.
\end{cor}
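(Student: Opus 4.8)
\emph{Proof proposal.} The plan is to turn the divisibility $m^2 \mid l(m)$ into a comparison of exponents, one prime at a time. Writing $m^2 = \prod_{i=1}^k p_i^{2b_i}$ and recalling $l(m) = \prod_{i=1}^k p_i^{\lfloor \log_{p_i} m\rfloor + b_i}$, both numbers are supported on the same primes $p_1,\dots,p_k$, so $m^2 \mid l(m)$ holds if and only if $2b_i \le \lfloor \log_{p_i} m\rfloor + b_i$ for every $i$, i.e.\ if and only if $b_i \le \lfloor \log_{p_i} m\rfloor$ for every $i$.

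The second step is to verify this last inequality. Since $p_i^{b_i}$ is a factor of $m$ by the definition of the prime factorisation, we have $p_i^{b_i} \le m$, hence $b_i \le \log_{p_i} m$; as $b_i$ is an integer this upgrades at once to $b_i \le \lfloor \log_{p_i} m\rfloor$. Substituting back into the exponent comparison from the first step gives $2b_i \le \lfloor \log_{p_i} m\rfloor + b_i$ for all $i$, and multiplying the prime powers together yields $m^2 \mid l(m)$.

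I do not expect any genuine obstacle here; the only points requiring a word of care are that passing to the floor is legitimate precisely because each $b_i \in \mathbb{N}$, and the degenerate case $m=1$, where both sides are the empty product $1$ and the claim is immediate. If desired, one can read off slightly more: equality $m^2 = l(m)$ occurs exactly when $b_i = \lfloor \log_{p_i} m\rfloor$ for each $i$, so the corollary is in general not tight, which motivates the sharper estimates pursued in the sequel.
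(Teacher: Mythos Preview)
Your proof is correct and is precisely the natural expansion of what the paper intends: the authors do not write out a proof at all, saying only that the corollary is ``quite evident'' from the formula $l(m)=\prod_{i=1}^{k} p_i^{\lfloor\log_{p_i} m\rfloor+b_i}$, and your exponent comparison $b_i\le\lfloor\log_{p_i} m\rfloor$ (from $p_i^{b_i}\mid m$) is exactly the obvious verification they have in mind.
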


\begin{cor}
$m$ has only one prime factor ($m=p^b$ where $p$ is prime) if and only if $l(m)=m^2$.
\end{cor}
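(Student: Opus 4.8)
The plan is to compare the prime factorizations of $l(m)$ and $m^2$ directly. Writing $m=\prod_{i=1}^k p_i^{b_i}$ we have $m^2=\prod_{i=1}^k p_i^{2b_i}$, while the formula for the minimal period proved above gives $l(m)=\prod_{i=1}^k p_i^{\lfloor\log_{p_i}m\rfloor+b_i}$. These involve exactly the same set of primes, so by unique factorization $l(m)=m^2$ if and only if $\lfloor\log_{p_i}m\rfloor=b_i$ for every $i$.

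Next I would record that one inequality is automatic: since $p_i^{b_i}\mid m$ we have $p_i^{b_i}\le m$, hence $\lfloor\log_{p_i}m\rfloor\ge b_i$ for all $i$ — this is precisely the content of the previous corollary $m^2\mid l(m)$. Therefore the condition $l(m)=m^2$ is equivalent to the reverse inequality $\lfloor\log_{p_i}m\rfloor\le b_i$ for every $i$, i.e.\ $m<p_i^{\,b_i+1}$, which after dividing through by $p_i^{b_i}$ reads $\prod_{j\ne i}p_j^{b_j}<p_i$ for every $i$.

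For the forward direction, if $m=p^b$ has a single prime factor then for the unique index $i$ the product $\prod_{j\ne i}p_j^{b_j}$ is empty and equals $1<p_i$, so the condition holds and $l(m)=m^2$. For the converse, suppose $k\ge 2$ and let $p_i$ be the \emph{smallest} prime factor of $m$. Then every $j\ne i$ satisfies $p_j>p_i$ and $b_j\ge 1$, so $\prod_{j\ne i}p_j^{b_j}>p_i$; equivalently $m=p_i^{b_i}\prod_{j\ne i}p_j^{b_j}>p_i^{\,b_i+1}$, whence $\lfloor\log_{p_i}m\rfloor\ge b_i+1$. Thus the exponent of $p_i$ in $l(m)$ strictly exceeds its exponent in $m^2$, so $l(m)\ne m^2$ (in fact $l(m)>m^2$), completing the equivalence.

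The argument is short, and the only points needing a little care are the treatment of the empty product in the single‑prime case and the choice of the smallest prime factor, which is what forces the strict inequality $\prod_{j\ne i}p_j^{b_j}>p_i$; beyond this bookkeeping with exponents there is no real obstacle.
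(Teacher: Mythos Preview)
Your argument is correct and is exactly the kind of exponent comparison the paper has in mind when it declares the corollary ``quite evident'' from Theorem~1.7 without proof: equating $l(m)=\prod p_i^{\lfloor\log_{p_i}m\rfloor+b_i}$ with $m^2=\prod p_i^{2b_i}$ forces $\lfloor\log_{p_i}m\rfloor=b_i$ for every $i$, and your choice of the smallest prime factor cleanly produces the strict inequality that rules out the multi-prime case. There is nothing to add.
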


We donot prove the corollaries here, as it is quite evident that they follow from the previous theorem.

\section{{Main Result}}
Before we prove our main result, we shall state and prove two lemmas.

\begin{lem}
Let $n$ be relatively prime to $m$. Then, $$\binom{n}{m}\equiv \binom{n-1}{m}~(mod~m).$$
\end{lem}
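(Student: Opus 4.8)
The plan is to reduce the claimed congruence to a single divisibility statement via Pascal's rule, and then exploit the coprimality hypothesis. First I would apply the identity $\binom{n}{m}=\binom{n-1}{m}+\binom{n-1}{m-1}$, so that the difference $\binom{n}{m}-\binom{n-1}{m}$ collapses to $\binom{n-1}{m-1}$; the lemma is then equivalent to the assertion that $m\mid\binom{n-1}{m-1}$ whenever $\gcd(n,m)=1$. (Equivalently, one can reach the same reformulation by writing both binomial coefficients as quotients $\frac{n(n-1)\cdots(n-m+1)}{m!}$ and $\frac{(n-1)\cdots(n-m)}{m!}$, pulling out the common factor $(n-1)\cdots(n-m+1)$, and observing that the bracket $n-(n-m)=m$ cancels one factor in $m!$.)

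Next I would bring in the absorption identity $m\binom{n}{m}=n\binom{n-1}{m-1}$, which is immediate after writing each side as $\frac{n!}{(m-1)!\,(n-m)!}$. The left-hand side is visibly a multiple of $m$, so $m\mid n\binom{n-1}{m-1}$. Since $\gcd(n,m)=1$, Euclid's lemma lets me cancel the factor $n$ and conclude $m\mid\binom{n-1}{m-1}$, which is exactly the reformulated statement; tracing back through Pascal's rule gives $\binom{n}{m}\equiv\binom{n-1}{m}\pmod{m}$.

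The argument has no serious obstacle; it is essentially two standard binomial identities plus one cancellation. The only points to keep an eye on are the degenerate ranges: when $m>n$ all three binomial coefficients vanish, and if one adopts the extended definition of $\binom{n}{m}$ for $n<0$ from the earlier remark, one should note that both identities used above persist in that setting, so the congruence remains trivially $0\equiv 0$ there. It is also worth remarking that this lemma says precisely that the (periodic) sequence $a_n=\binom{n}{m}\pmod{m}$ does not change value when $n$ is coprime to $m$, which is the form in which it will be used in the proof of the main result.
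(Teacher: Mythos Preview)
Your proof is correct but follows a genuinely different route from the paper's. The paper argues directly via the ratio identity $\binom{n}{m}=\binom{n-1}{m}\cdot\frac{n}{n-m}$, cross-multiplies to obtain $(n-m)\binom{n}{m}=n\binom{n-1}{m}$, and then cancels using the observation that $n\equiv n-m\pmod m$ with both factors coprime to $m$. You instead pass through Pascal's rule to reduce the claim to the divisibility $m\mid\binom{n-1}{m-1}$, and establish that via the absorption identity $m\binom{n}{m}=n\binom{n-1}{m-1}$ together with Euclid's lemma. Both arguments ultimately hinge on the same cancellation of a factor coprime to $m$; the paper's version is marginally shorter (one identity instead of two), while yours isolates the standalone fact that $m\mid\binom{n-1}{m-1}$ whenever $\gcd(n,m)=1$, which is of independent interest and arguably the more memorable formulation.
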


\begin{proof}
Note that if $n$ is relatively prime to $m$ than so is $n-m$. We have $$\binom{n}{m}=\binom{n-1}{m}\cdot\frac{n}{n-m}~(mod~m)$$ which is equivalent to $$(n-m)\cdot \binom{n}{m}=n\cdot \binom{n-1}{m}~(mod~m)$$ which is further equivalent to $$\binom{n}{m}=\binom{n-1}{m}~(mod~m)$$ because $n=n-m~(mod~m)$ and both are relatively prime to $m$.
\end{proof}

\begin{lem}
Let $m$ be even. Then for every integer $k$ we have, $$\binom{m+k}{m}\equiv \binom{l(m)-1-k}{m}~(mod~m).$$
\end{lem}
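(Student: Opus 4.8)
The plan is to exploit a reflection symmetry of the generalized binomial symbol $\binom{n}{m}=\frac{\prod_{i=0}^{m-1}(n-i)}{m!}$ together with the periodicity already established for the sequence $a_n=\binom{n}{m}~(mod~m)$. The key elementary identity, valid for every integer $n$, is
$$\binom{-1-n}{m}=(-1)^m\binom{n+m}{m},$$
which I would prove simply by substituting $-1-n$ into the product formula: each factor becomes $-1-n-i=-(n+1+i)$, and pulling the $m$ minus signs out of $\prod_{i=0}^{m-1}(-1-n-i)$ turns it into $(-1)^m\prod_{i=0}^{m-1}(n+1+i)=(-1)^m\prod_{j=1}^{m}(n+j)$, which is exactly $(-1)^m$ times the numerator of $\binom{n+m}{m}$.

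Now the hypothesis that $m$ is even enters, and it is used exactly once: it kills the factor $(-1)^m$, so the identity reads $\binom{-1-n}{m}=\binom{n+m}{m}$ as integers. Applying this with $n=k$ gives
$$\binom{m+k}{m}=\binom{-1-k}{m}.$$
Finally I would invoke the remark following the period-length theorem, which legitimizes the product-formula definition of $\binom{n}{m}$ for negative upper arguments and extends the periodicity $\binom{n+l(m)}{m}\equiv\binom{n}{m}~(mod~m)$ to all integers $n$. Taking $n=-1-k$ there yields $\binom{-1-k}{m}\equiv\binom{l(m)-1-k}{m}~(mod~m)$, and chaining this with the previous display produces the claimed congruence $\binom{m+k}{m}\equiv\binom{l(m)-1-k}{m}~(mod~m)$.

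I do not expect a serious obstacle. The one point that deserves care is consistency of conventions: both the reflection identity and the periodicity must be read for the extension of $\binom{\cdot}{m}$ to negative upper indices via the product formula, which is precisely the setting sanctioned by the remark; once that is granted, the argument is a two-line chain of one exact identity and one congruence. It is also worth noting in passing that the parity assumption is genuinely needed, since for odd $m$ the extra sign $(-1)^m=-1$ would instead give $\binom{m+k}{m}\equiv-\binom{l(m)-1-k}{m}~(mod~m)$.
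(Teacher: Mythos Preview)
Your proposal is correct and is essentially the same argument as the paper's. Both proofs use exactly the same two ingredients: the reflection coming from multiplying each of the $m$ factors in the product formula by $-1$ (which is harmless because $m$ is even), and then one application of the $l(m)$-periodicity for negative upper indices from the remark after Theorem~1.4. The only cosmetic difference is the direction of the chain and the intermediate value: the paper starts from $\binom{l(m)-1-k}{m}$, reflects to $\binom{k+m-l(m)}{m}$, and then shifts by $l(m)$; you start from $\binom{m+k}{m}$, reflect to $\binom{-1-k}{m}$, and then shift by $l(m)$.
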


\begin{proof}
We have $$\binom{l(m)-1-k}{m}=\frac{(l(m)-1-k)(l(m)-1-k-1)\dots(l(m)-1-m-k+1)}{m!}$$ and because there are an even number ($m$) of factors we can multiply each one by $-1$ and still have the same number. So, $$\binom{l(m)-1-k}{m}=\frac{(k+1-l(m))(k+2-l(m))\dots(k+m-l(m))}{m!}$$
which is precisely $\binom{k+m-l(m)}{m}$ and is by the previous theorem equal to $\binom{m+k}{m}~(mod~m)$.
\end{proof}

We state without proof the following famous theorem in Number Theory

\begin{thm}
\textbf{(P.~G.~L.~Dirichlet, 1837)} If $a$ and $b$ are relatively prime positive integers, then the arithmetic progression
$$a,a+b,a+2b,a+3b,\cdots$$
contains infinitely many primes.
\end{thm}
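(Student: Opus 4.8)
The plan is to follow Dirichlet's original analytic argument, which remains essentially the only known route to this statement; none of the combinatorial results above are of help here. Throughout write $\phi$ for Euler's totient and fix the modulus $b$. First I would introduce the group of Dirichlet characters modulo $b$ — the completely multiplicative functions $\chi\colon\mathbb{Z}\to\mathbb{C}$ of period $b$ that vanish precisely on the integers not coprime to $b$. There are $\phi(b)$ of them; they form an abelian group under pointwise multiplication, with identity the principal character $\chi_0$, and they satisfy the orthogonality relation
$$\frac{1}{\phi(b)}\sum_{\chi}\overline{\chi(a)}\,\chi(n)=\begin{cases}1,& n\equiv a\pmod b,\\ 0,&\text{otherwise,}\end{cases}$$
whenever $\gcd(a,b)=1$. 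To each $\chi$ one attaches the Dirichlet $L$-function $L(s,\chi)=\sum_{n\ge 1}\chi(n)\,n^{-s}$, which for $\operatorname{Re} s>1$ converges absolutely and admits the Euler product $L(s,\chi)=\prod_{p}\bigl(1-\chi(p)p^{-s}\bigr)^{-1}$.

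Next I would take logarithms of the Euler products and combine them with the orthogonality relation, obtaining for real $s>1$
$$\sum_{p\equiv a\,(b)}\frac{1}{p^{s}}=\frac{1}{\phi(b)}\sum_{\chi}\overline{\chi(a)}\,\log L(s,\chi)+O(1)\qquad(s\to 1^{+}),$$
where the $O(1)$ absorbs the always-convergent contribution of the higher prime powers $p^{k}$, $k\ge 2$. The term coming from $\chi_{0}$ is $\frac{1}{\phi(b)}\log L(s,\chi_{0})$, and since $L(s,\chi_{0})=\zeta(s)\prod_{p\mid b}\bigl(1-p^{-s}\bigr)$ differs from the Riemann zeta function only by a finite nonzero factor, this term grows like $\frac{1}{\phi(b)}\log\frac{1}{s-1}$ as $s\to 1^{+}$.

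The crux is to show that the remaining terms — one for each non-principal $\chi$ — stay bounded as $s\to 1^{+}$. Each such $L(s,\chi)$ extends continuously to $\operatorname{Re} s\ge 1$, by Abel summation applied to the bounded partial sums $\sum_{n\le x}\chi(n)$, so $\log L(s,\chi)$ is bounded near $s=1$ exactly when $L(1,\chi)\ne 0$. For a complex character ($\chi\ne\overline{\chi}$) this is the easy case: a zero of $L(\cdot,\chi)$ at $s=1$ forces one of $L(\cdot,\overline{\chi})$ as well, so $\prod_{\chi}L(s,\chi)\to 0$ as $s\to 1^{+}$, contradicting the fact that this product is a Dirichlet series with non-negative coefficients and constant term $1$. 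I expect the main obstacle to be the surviving case, that of a real non-principal (quadratic) character, where $L(1,\chi)\ne 0$ has to be established by a genuinely different argument; the most economical I know is Landau's theorem on Dirichlet series with non-negative coefficients applied to $\zeta(s)L(s,\chi)$, or equivalently an appeal to Dirichlet's class-number formula, which exhibits $L(1,\chi)$ as a strictly positive quantity.

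Assembling these facts, the right-hand side of the displayed identity tends to $+\infty$ as $s\to 1^{+}$, hence so does $\sum_{p\equiv a\,(b)}p^{-s}$. This would be impossible were there only finitely many primes in the progression $a,a+b,a+2b,\dots$, since the sum would then be finite and bounded as $s\to 1^{+}$. Therefore the progression contains infinitely many primes, as claimed.
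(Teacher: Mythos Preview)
Your outline is a faithful sketch of the classical analytic proof of Dirichlet's theorem, and as such it is correct in its broad strokes: characters modulo $b$, their orthogonality, Euler products for the $L$-functions, isolation of the divergent principal-character contribution, and the key non-vanishing $L(1,\chi)\neq 0$ for non-principal $\chi$, with the real-character case singled out as the hard step. Nothing here is wrong.

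However, there is nothing to compare your argument against: the paper explicitly writes ``We state without proof the following famous theorem in Number Theory'' and then uses Dirichlet's theorem purely as a black box in the proof of Theorem~2.4, to guarantee a prime in each of the arithmetic progressions $k\cdot l(p)-1$ and $k\cdot l(p)+(p+r+1)$. The authors neither prove nor sketch it; they simply invoke it. So your proposal is not a different route from the paper's proof --- the paper has no proof of this statement at all --- but rather an (accurate) summary of the standard proof that the paper deliberately omits as outside its scope.
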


Now, we are in a position to state and proof a result which is stronger than \textbf{Theorem 1.1},

\begin{thm}
A natural number $p>1$ is a prime if and only if $\binom{q}{p}-\lfloor\frac{q}{p}\rfloor$ is divisible by $p$ for every prime $q$, where $\binom{q}{p}$ is the number of different ways in which we can choose $p$ out of $q$ elements and $\lfloor x \rfloor$ is the greatest integer not exceeding the real number $x$.
\end{thm}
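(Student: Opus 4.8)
The forward implication needs nothing new: if $p$ is prime then Theorem 1.1 already gives $\binom{n}{p}\equiv\lfloor n/p\rfloor\pmod p$ for every non-negative integer $n$, in particular for $n=q$ prime. The entire content is the converse, which I would prove in contrapositive form: given a composite $p$, produce one prime $q$ with $\binom{q}{p}\not\equiv\lfloor q/p\rfloor\pmod p$. The guiding observation is that both terms depend only on $q$ modulo $l(p)$. Indeed, by Theorem 1.6 the sequence $\binom{n}{p}\bmod p$ is periodic with period $l(p)$; and since $p^2\mid l(p)$ by Corollary 1.8, the integer $l(p)/p$ is itself divisible by $p$, so for $q=r+t\,l(p)$ one gets $\lfloor q/p\rfloor=\lfloor r/p\rfloor+t\,l(p)/p\equiv\lfloor r/p\rfloor\pmod p$. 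Hence it suffices to find an integer $r$ with $1\le r<l(p)$, $\gcd(r,l(p))=1$, and $\binom{r}{p}\not\equiv\lfloor r/p\rfloor\pmod p$: Dirichlet's theorem (Theorem 2.3) applied to the progression $r,\,r+l(p),\,r+2l(p),\dots$ then produces a prime $q$ in this residue class, and by the above $\binom{q}{p}-\lfloor q/p\rfloor\equiv\binom{r}{p}-\lfloor r/p\rfloor\not\equiv0\pmod p$. Everything now reduces to choosing $r$, and here I would split on the parity of $p$.

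For $p$ even I would take $r=l(p)-1$, which is automatically coprime to $l(p)$. This is exactly the situation Lemma 2.2 is designed for: with $k=l(p)-1-p$ it yields $\binom{l(p)-1}{p}\equiv\binom{p}{p}=1\pmod p$. On the other hand $\lfloor(l(p)-1)/p\rfloor=l(p)/p-1\equiv-1\pmod p$, again using $p\mid l(p)/p$. Therefore $\binom{r}{p}-\lfloor r/p\rfloor\equiv 1-(-1)=2\pmod p$, which is nonzero since $p\ge 4$.

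For $p$ odd I would instead exploit the near miss at $p+q_0$, where $q_0$ denotes the smallest prime factor of $p$: the proof of Theorem 1.1 shows $\binom{p+q_0}{p}\not\equiv 1\pmod p$, while $\lfloor(p+q_0)/p\rfloor=1$. The obstruction is that $q_0\mid p+q_0$, so $p+q_0$ is useless for Dirichlet's theorem. I would repair this by sliding one step with Lemma 2.1: set $r=p+q_0+1$. Since $q_0\ge 3$ is odd, $q_0+1$ is even and hence divisible by no prime factor of $p$ (all of which are odd); more precisely, a prime factor $q'$ of $p$ with $q'\mid q_0+1$ would satisfy $q_0\le q'\le q_0+1$, forcing $q'=q_0$ or $q'=q_0+1$, both impossible. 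So $\gcd(r,p)=1$, hence $\gcd(r,l(p))=1$; Lemma 2.1 gives $\binom{r}{p}\equiv\binom{p+q_0}{p}\not\equiv 1\pmod p$; and $\lfloor r/p\rfloor=1$ because $q_0+1<p$. Thus $r$ has all the properties required in the first paragraph, and Dirichlet's theorem finishes the argument.

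The principal difficulty is precisely the construction of $r$, and a parity split seems hard to avoid because the clean one-step shift of the odd case collapses when $3\mid p$: then $r=p+q_0+1=p+3$ shares the factor $3$ with $p$, so a single application of Lemma 2.1 can no longer land on a residue coprime to $l(p)$, which is why the even case must be treated by the more global argument resting on Lemma 2.2. Beyond that, the only things to watch are the two appeals to $p^2\mid l(p)$ (Corollary 1.8): one to carry $\lfloor\cdot/p\rfloor$ unchanged along the arithmetic progression, and one to evaluate $\lfloor(l(p)-1)/p\rfloor\equiv-1\pmod p$.
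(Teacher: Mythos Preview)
Your proof is correct and follows essentially the same route as the paper: forward direction from Theorem~1.1, then for composite $p$ a parity split with $r=l(p)-1$ (via Lemma~2.2) in the even case and $r=p+q_0+1$ (via Lemma~2.1) in the odd case, followed by Dirichlet on the progression $r+\mathbb{Z}\,l(p)$. Your write-up is in fact slightly cleaner, since you isolate up front the reduction ``both sides are $l(p)$-periodic, so it suffices to exhibit one coprime residue $r$'' before splitting into cases, whereas the paper builds the arithmetic progression separately in each case.
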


\begin{proof}
If $p$ is prime than the result follows from \textbf{Theorem 1.1}. To prove the if only part we just have to construct a counterexample for every composite $p$.

We assume $p$ is an even composite number. We have $\binom{p}{p}=1~(mod~p)$ and by \textbf{Lemma 2.1} we have $\binom{l(p)-1}{p}=\binom{p}{p}=1~(mod~p)$. And by \textbf{Theorem 1.4} we have $\binom{k\cdot l(p)-1}{p}=1$ for every natural $k$.

On the other hand because $p^2|l(p)$ we have $\lfloor\frac{k\cdot l(p)-1}{p}\rfloor=\lfloor\frac{Ap^2-1}{p}\rfloor=-1~(mod~p)$. Therefore $\binom{n}{p}\neq \lfloor\frac{n}{p}\rfloor~(mod~p)$ for every integer $n$ in the sequence $b_k=k\cdot l(p)-1$ and since $1$ and $l(p)$ are relatively prime we have a prime number in this sequence by \textbf{Theorem 2.3}. Therefore there exists a prime number $q$ such that $\binom{q}{p}\neq \lfloor\frac{q}{p}\rfloor~(mod~p)$.

Now we aasume that $p$ is an odd composite number. Now denote $r$ as the smallest prime devisor of $p$ ($r\neq2$). We already know from the proof of \textbf{Theorem 1.1} that $\binom{p+r}{p}\neq \lfloor\frac{p+r}{p}\rfloor$ mod $p$.

Because $r\neq2$ we know that $r+1$ is composite. Note that $p+r+1$ is relatively prime to $p$ because every prime factor that would divide both would also divide $r+1$ and this prime factor would therefore be smaller than $r$ which is a contradiction.

Now by \textbf{Lemma 2.1} we can deduce that $$\binom{p+r+1}{p}\equiv \binom{p+r}{p}~(mod~p)$$ which means $$\binom{p+r+1}{p}\neq \lfloor\frac{p+r+1}{p}\rfloor~(mod~p).$$ Because $p<p+r<p+r+1<2p$ and therefore $\lfloor\frac{p+r}{p}\rfloor=\lfloor\frac{p+r+1}{p}\rfloor$ mod $p$.

Now by \textbf{Theorem 1.4} we know that for every natural $n$ in the sequence $c_k=k\cdot l(p)+(p+r+1)$ we have $\binom{n}{p}\neq \lfloor\frac{n}{p}\rfloor~(mod~p)$.

Since $p$ and $l(p)$ have the same prime factors due to the formula for $l(p)$ so $p+r+1$ which is relatively prime to $p$ is also relatively prime to $l(p)$.

Again by \textbf{Theorem 2.3} we know there exists a prime number in the sequence $c_k$ as defined above. Therefore there exists a prime number $q$ such that $$\binom{q}{p}\neq \lfloor\frac{q}{p}\rfloor~(mod~p).$$

This completes the rest of the proof.
\end{proof}

\section{{Conclusion}}

We see here, that a simple Olympiad problem has lead us to all the above results mentioned in this paper. This is just another testament of the beauty and originality of Olympiad mathematics.

\section{{Acknowledgements}}

The authors would like to thank Prof.~Nayandeep Deka Baruah for his encouragement and for reading through an earlier version of this work. One of us (MPS) would like to thank Prof.~Mangesh B.~Rege for introducing him to the beautiful world of Olympiad mathematics.

\end{document}